\documentclass[amstex,16pt, amssymb]{article}
\usepackage{mathtext}
\usepackage[cp1251]{inputenc}
\usepackage[T2A]{fontenc}
\usepackage[dvips]{graphicx}
\usepackage{amsmath}
\usepackage{amssymb}
\usepackage{amsxtra}
\usepackage{latexsym}
\usepackage{ifthen}
\usepackage{amsthm}

\def\XXint#1#2#3{{\setbox0=\hbox{$#1{#2#3}{\int}$}
     \vcenter{\hbox{$#2#3$}}\kern-.5\wd0}}

\theoremstyle{plain}
\newtheorem{theorem}{Theorem}[section]
\newtheorem{lemma}{Lemma}[section]
\newtheorem{proposition}{Proposition}[section]
\newtheorem{corollary}{Corollary}[section]

\theoremstyle{definition}

\newtheorem{remark}{Remark}[section]

\renewcommand{\abstract}{\textbf{Abstract. }\medskip}

\numberwithin{equation}{section}

\numberwithin{equation}{section}

\begin{document}

\title{Extremal problem of area approximation}

\author{O. O.~Pokutnyi, R. R.~Salimov, M. V. Stefanchuk}

\date{}

\maketitle

\begin{abstract}
This paper investigates the problem of optimal piecewise li\-near approximation of a smooth curve, in which the area of the region enclosed between the graph of the original function and the constructed interpola\-ting polyline is minimized. This problem arises in technological processes such as laser and plasma cutting of metals, milling, additive manufactu\-ring, and in the preparation of control programs for CNC equipment. For a quadratic parabola, the exact value of the minimum area is established, and it is shown that the optimal partition of the interval is uniform. For a cubic function, it is demonstrated that the optimal interpolation nodes are distributed non-uniformly; in the case of two internal partition points, their exact coordinates and the corresponding minimum area are obtained. The results can be applied in CAD systems for optimizing tool paths based on the criterion of minimum deviation area.
\end{abstract}

\bigskip

{\bf MSC 2020:} {41A10, 41A44, 41A45, 49K35.}

\bigskip

{\bf Key words:} {extremal problem, optimal partition of an interval, piecewise linear interpolation, area between the graph of a function and a po\-ly\-go\-nal line, quadratic parabola, cubic parabola, area minimization, interpolation nodes, extremum conditions, partial derivatives, Hessian matrix, global minimum, smooth curve, approximation of functions, numerical methods, optimal control, geometric approximation.}


\section{Introduction}
Imagine you need to cut a part with a curved edge from a sheet. The cutter cannot move along a perfect curve — it moves in straight segments, and the programmer can only specify a limited number of points where the cutter changes direction. The task is to choose these points so that the gap between the ideal edge and the actual cutting line is as small as possible. This allows saving material, time, tooling, and electrical energy. Such a problem arises wherever flat parts with curved contours are cut or machined: in laser and plasma cutting of metals, CNC milling, 3D printing, and the preparation of control programs.

Problems of optimal piecewise approximation have a long history dating back to Bellman's dynamic programming approach and the minimax theory de\-ve\-lo\-ped by M.\,M. Gavrilovic (see \cite{Gavrilovic}). At the same time, the theory of opti\-mal appro\-ximation of function classes has been extensively developed by the Ukrainian approximation school represented by A.\,S. Romanyuk (see \cite{Romanyuk}) and their collaborators. However, the problem of minimizing the enclosed area between the graph of a smooth function and its interpolating polygonal line appears to have received little attention in the literature.

In this paper, we study the problem of finding the optimal partition of a segment for quadratic and cubic functions such that the area between the curve (the graph of the function) on this segment and the piecewise linear function passing through the partition points is minimal. In particular, we show that the optimal partitions for the functions $y = x^{2}$ and $y = x^{3}$ on the segment $[0,1]$ differ. In subsequent research, we will consider the corresponding problem of minimizing the $n$-dimensional volume.

Piecewise linear interpolation is the simplest representative of spline appro\-ximation and remains an important tool in approximation theory due to its computational efficiency and theoretical significance. Classical results on spline approximation, interpolation, and optimal approximation were developed in the works of J.\,H. Ahlberg, E.\,N. Nilson, J.\,L. Walsh, N.\,P.~Korneichuk, V.\,K.~Dzya\-dyk, A.\,S. Serdyuk, A.\,L. Shidlich, Carl de Boor, Larry L. Schumaker (see \cite{Ahlberg,Kornei,Dzyad,Serdyuk,Boor,Larry}). These contributions established the theoretical foundations of spline approximation and stimulated extensive research on interpolation methods and approximation processes.

While the classical theory primarily investigates approximation with respect to the uniform, integral, or other standard norms, considerably less attention has been paid to optimization criteria based on the geometric relationship between a function and its interpolant. In the present paper, we study the problem of minimizing the area enclosed between the graph of a function and its piecewise linear interpolant by optimally choosing the interpolation nodes. This formulation naturally combines ideas from spline approximation theory with nonlinear optimization and provides a new perspective on the construction of optimal piecewise linear interpolants.

\medskip

To solve the extremal problems, we will use the following result, see, e.g., \cite{Bertsekas,Boyd,Rockafellar}.

\begin{proposition}\label{prop1}
{\it Let \( F: \Omega \to \mathbb{R} \) be a twice continuously differentiable function
on a convex open set \( \Omega \subset \mathbb{R}^n \).
Suppose that:\\
1. \( x^* \in \Omega \) is a critical point, i.e. \( \nabla F(x^*) = 0; \)\\
2. the Hessian matrix \( H(x) \) is positive definite for all \( x \in \Omega. \)\\
Then \( x^* \) is the unique global minimum of \( F \) on \( \Omega \).}
\end{proposition}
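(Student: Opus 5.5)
We restrict $F$ to line segments in $\Omega$, use convexity of $\Omega$ to keep those segments inside the domain, and apply Taylor's formula with integral remainder along each segment. Fix any $x\in\Omega$ with $x\neq x^*$ and set $h=x-x^*\neq 0$. Since $\Omega$ is convex, the whole segment $x^*+th$, $t\in[0,1]$, lies in $\Omega$. Hence $\varphi(t)=F(x^*+th)$ is a $C^2$ function on a neighbourhood of $[0,1]$. This works because $\Omega$ is open, so the segment has a small open neighbourhood in $\Omega$.

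By the chain rule, $\varphi'(t)=\langle \nabla F(x^*+th),h\rangle$ and $\varphi''(t)=\langle H(x^*+th)h,h\rangle$. Taylor's formula with integral remainder gives
\begin{equation*}
F(x)-F(x^*)=\varphi(1)-\varphi(0)=\varphi'(0)+\int_0^1 (1-t)\,\varphi''(t)\,dt .
\end{equation*}
Condition 1 gives $\varphi'(0)=\langle\nabla F(x^*),h\rangle=0$.

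Condition 2 gives $\varphi''(t)=\langle H(x^*+th)h,h\rangle>0$ for every $t\in[0,1]$, since $h\neq0$ and $x^*+th\in\Omega$. The weight $1-t$ is nonnegative and is strictly positive on $[0,1)$, and $\varphi''$ is continuous and strictly positive. So the integral is strictly positive, and $F(x)>F(x^*)$.

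Because $x\neq x^*$ was arbitrary, $x^*$ is a global minimizer of $F$ on $\Omega$. It is also unique: every other point of $\Omega$ gives a strictly larger value.

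The only step needing care is the justification that the segment stays inside $\Omega$, which is exactly where convexity is used. Without it the one-dimensional reduction fails and the statement can be false. Uniqueness of the critical point comes out automatically: any other critical point $y$ would satisfy $F(x)>F(y)$ for all $x\neq y$ by the same argument, contradicting $F(x^*)<F(y)$.
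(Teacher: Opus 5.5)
Your proof is correct. The paper does not prove Proposition~\ref{prop1}; it only refers to standard texts (Bertsekas, Boyd--Vandenberghe, Rockafellar). There the argument runs in two stages. First, a positive definite Hessian on a convex set makes $F$ strictly convex. Second, for a differentiable strictly convex function, the first-order inequality $F(x)>F(x^*)+\langle\nabla F(x^*),x-x^*\rangle=F(x^*)$ for $x\neq x^*$ gives the unique global minimum.

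Your integral-remainder Taylor expansion along $t\mapsto x^*+t(x-x^*)$ fuses these two stages into one computation, so the underlying mechanism is the same. What your version gains is economy. It only ever uses segments with one endpoint at $x^*$. This includes your uniqueness-of-critical-point remark, since the contradiction only needs the single segment joining $y$ and $x^*$. So the argument works verbatim if $\Omega$ is merely star-shaped about $x^*$. The textbook route, in exchange, gives the stronger structural fact that $F$ is strictly convex on all of $\Omega$.
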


\section{The case of a quadratic parabola}

We now turn to the problem of optimal piecewise linear approximation
for specific functions. The simplest nontrivial case is that of a
quadratic parabola, for which the area between the curve and the
interpolating polygonal line can be computed explicitly. 

Before proceeding to the main theorem, we establish a number of auxiliary results. The following lemma provides an explicit expression for the secant line of a quadratic parabola on an arbitrary interval.

\begin{lemma}\label{lemma1}
Let $f(x) = Ax^2 + Bx + C$, $A\neq 0$, and let $p < q$. The secant line connecting the points $(p, f(p))$ and $(q, f(q))$ on the graph of $f$ has the equation
\[
\ell_{p,q}(x) = \bigl(A(p+q) + B\bigr)x - A pq + C.
\]
\end{lemma}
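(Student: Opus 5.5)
The plan is to use the fact that a non-vertical line through two points with distinct abscissas is unique. It therefore suffices to show two things: the proposed $\ell_{p,q}$ is an affine function of $x$, and it agrees with $f$ at $x=p$ and at $x=q$. Since $p<q$, the two points $(p,f(p))$ and $(q,f(q))$ have distinct abscissas, so the line joining them is not vertical and is the graph of a unique affine function.

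The cleanest way to carry this out is to look at the difference
\[
f(x)-\ell_{p,q}(x) = Ax^2 + Bx + C - \bigl(A(p+q)+B\bigr)x + Apq - C = A\bigl(x^2-(p+q)x+pq\bigr) = A(x-p)(x-q).
\]
This identity vanishes at $x=p$ and $x=q$, so $\ell_{p,q}(p)=f(p)$ and $\ell_{p,q}(q)=f(q)$. Since $\ell_{p,q}$ is of the form $kx+b$, uniqueness shows it is the secant line.

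As a cross-check, one can also derive the formula directly. The slope is $(f(q)-f(p))/(q-p) = A(q+p)+B$, using $q^2-p^2=(q-p)(q+p)$. The intercept is then $f(p)-(A(p+q)+B)p = C - Apq$.

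There is no real obstacle here; the only step requiring care is the factorization $x^2-(p+q)x+pq=(x-p)(x-q)$. The factored identity $f-\ell_{p,q}=A(x-p)(x-q)$ is also what the later area computation will use, since it gives
\[
\int_p^q |f-\ell_{p,q}|\,dx=\frac{|A|(q-p)^3}{6}.
\]
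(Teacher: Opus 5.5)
Your proof is correct. Your ``cross-check'' is exactly the paper's argument: the paper starts from the two-point form $\ell_{p,q}(x)=f(p)+\frac{f(q)-f(p)}{q-p}(x-p)$, uses $f(q)-f(p)=(q-p)\bigl(A(p+q)+B\bigr)$, and simplifies.

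Your main route is different in spirit. Instead of deriving the line, you verify the candidate through the identity $f(x)-\ell_{p,q}(x)=A(x-p)(x-q)$, together with uniqueness of the non-vertical line through two points with distinct abscissas. The paper's derivation is constructive, since it does not need the answer in advance.

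Your identity carries more information. Because $(x-p)(x-q)\le 0$ on $[p,q]$, the difference $f-\ell_{p,q}$ has constant sign there. That fact is what justifies writing the area in Lemma~\ref{lemma2} as $\bigl|\int_p^q(\ell_{p,q}-f)\,dx\bigr|$, a point the paper leaves implicit. It also reduces that lemma to the one-line computation $|A|\int_p^q(x-p)(q-x)\,dx=\frac{|A|}{6}(q-p)^3$.
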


\begin{proof}
The equation of the line through two points is
\[
\ell_{p,q}(x) = f(p) + \frac{f(q)-f(p)}{q-p}(x-p).
\]
Since $f(q)-f(p) = (q-p)(A(p+q)+B)$, substitution and simplification yield
\[
\ell_{p,q}(x) = Ap^2 + Bp + C + (A(p+q)+B)(x-p) = (A(p+q)+B)x - A pq + C.
\]
\end{proof}

The following lemma provides an explicit expression for the area between the graph of a quadratic parabola and the secant line on an arbitrary interval.

\begin{lemma}\label{lemma2}
Let $f(x) = Ax^2 + Bx + C$, $A \neq  0$, and let $p < q$. Then the area $S(p, q)$ between the graph of $f(x)$ and the secant line connecting the points $(p, f(p))$ and $(q, f(q))$ equals
\[
S(p, q) = \frac{|A|}{6}(q - p)^3.
\]
\end{lemma}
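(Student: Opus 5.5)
The plan is to use Lemma~\ref{lemma1} to write the difference between $f$ and its secant in factored form. Once that is done, the sign of the difference is constant on the interval and the integral reduces to a standard computation.

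By Lemma~\ref{lemma1},
\[
f(x)-\ell_{p,q}(x) = Ax^2 + Bx + C - \bigl(A(p+q)+B\bigr)x + Apq - C = A\bigl(x^2-(p+q)x+pq\bigr) = A(x-p)(x-q).
\]
For $x\in[p,q]$ we have $(x-p)(x-q)\le 0$. Hence $f-\ell_{p,q}$ does not change sign on $[p,q]$: it is $\le 0$ when $A>0$ and $\ge 0$ when $A<0$. The graphs meet only at the endpoints, so the region between them is the region bounded by the two curves over $[p,q]$. Its area is therefore
\[
S(p,q)=\int_p^q \bigl|f(x)-\ell_{p,q}(x)\bigr|\,dx = |A|\int_p^q (x-p)(q-x)\,dx .
\]

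To evaluate the last integral, substitute $x=p+t$ with $h=q-p$, which gives $\int_0^h t(h-t)\,dt = \frac{h^3}{2}-\frac{h^3}{3}=\frac{h^3}{6}$. This yields $S(p,q)=\frac{|A|}{6}(q-p)^3$.

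There is no real obstacle in this argument. The only point needing care is that removing the absolute value is justified by the constant sign of $(x-p)(x-q)$ on $[p,q]$, which is what makes the area equal to a single integral without splitting the interval.
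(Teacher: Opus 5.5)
Your proof is correct and takes the same route as the paper: use Lemma~\ref{lemma1} for the secant and integrate its difference with $f$ over $[p,q]$. Your factorization $f-\ell_{p,q}=A(x-p)(x-q)$ also makes explicit the constant-sign fact that the paper uses without comment when it puts the absolute value outside a single integral.
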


\begin{proof}
By Lemma \ref{lemma1},
\[
\ell_{p,q}(x) = \bigl(A(p+q) + B\bigr)x - A pq + C.
\]
Then
\[
S(p, q) = \left|\int_p^q \left( \bigl(A(p+q) + B\bigr)x - A pq + C \right) dx - \int_p^q \left( Ax^2 + Bx + C \right) dx\right|.
\]
Evaluating the integrals and collecting like terms yield
\[
S(p, q) = \frac{|A|}{6}(q - p)^3.
\]
\end{proof}

Now we establish a useful expression for the total area $S(x_1, \ldots, x_n)$ in terms of the lengths of the subintervals.

\begin{lemma}\label{lemma3}
Let $f(x) = Ax^2 + Bx + C$, $A \neq 0$, and let $a =x_0 < x_1 < \cdots < x_n < x_{n+1} = b$ be a partition. Let $\ell_i(x)$ be the chord connecting the points $(x_{i-1}, f(x_{i-1}))$ and $(x_i, f(x_i))$. Then the total area $S(x_1, \ldots, x_n)$ between the graph of $f(x)$ and the polygonal line formed by these chords is given by
\[
S(x_1, \ldots, x_n) = \frac{|A|}{6} \sum_{i=1}^{n+1} (x_i - x_{i-1})^3.
\]
\end{lemma}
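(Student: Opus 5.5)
The plan is to reduce the statement to Lemma \ref{lemma2} by additivity of area over the subintervals $[x_{i-1},x_i]$, $i=1,\ldots,n+1$. By definition, the region between the graph of $f$ and the polygonal line is the union of the regions between the graph of $f$ and the chord $\ell_i$ on each $[x_{i-1},x_i]$. These pieces overlap only along the vertical lines $x=x_i$, which have zero area. Hence
\[
S(x_1,\ldots,x_n)=\sum_{i=1}^{n+1}\int_{x_{i-1}}^{x_i}\bigl|f(x)-\ell_i(x)\bigr|\,dx .
\]

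The only point that needs care is that on each subinterval the integrand $f-\ell_i$ does not change sign. Otherwise the absolute value in Lemma \ref{lemma2}, which is taken outside the integral, would not give the geometric area. I will check this directly using Lemma \ref{lemma1}, with $p=x_{i-1}$ and $q=x_i$:
\[
f(x)-\ell_{p,q}(x)=Ax^2-A(p+q)x+Apq=A(x-p)(x-q).
\]
On $[p,q]$ the factor $(x-p)(x-q)$ is $\le 0$, so $f-\ell_{p,q}$ has the constant sign of $-A$. Therefore
\[
\int_p^q|f-\ell_{p,q}|\,dx=\Bigl|\int_p^q (f-\ell_{p,q})\,dx\Bigr|=S(p,q)=\frac{|A|}{6}(q-p)^3,
\]
where the last two equalities are Lemma \ref{lemma2}. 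As a check, $\int_p^q (x-p)(q-x)\,dx=(q-p)^3/6$, which agrees.

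Substituting this into the sum term by term gives
\[
S(x_1,\ldots,x_n)=\frac{|A|}{6}\sum_{i=1}^{n+1}(x_i-x_{i-1})^3 .
\]
The whole argument is routine. The one step that is not mere bookkeeping is the sign-constancy check. It relies on the factorization above, which also shows that $f$ is entirely on one side of every chord: below them if $A>0$, above them if $A<0$.
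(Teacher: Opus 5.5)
Your proof is correct and takes the same route as the paper, which applies Lemma~\ref{lemma2} on each subinterval $[x_{i-1},x_i]$ and sums. Your sign-constancy check via $f-\ell_{p,q}=A(x-p)(x-q)$ is a worthwhile addition: it justifies what the paper leaves implicit, namely that the absolute value taken outside the integral in Lemma~\ref{lemma2} equals the geometric area on each piece.
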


\begin{proof}
By Lemma \ref{lemma2} applied to each subinterval $[x_{i-1}, x_i]$, the area between the parabola and the chord is
\[
S_i = \frac{|A|}{6} (x_i - x_{i-1})^3.
\]
Summing over all intervals gives
\[
S(x_1, \ldots, x_n) = \sum_{i=1}^{n+1} S_i = \frac{|A|}{6} \sum_{i=1}^{n+1} (x_i - x_{i-1})^3.
\]
\end{proof}


To prove the uniqueness and global nature of the found minimum, we need to study the matrix of second derivatives of the objective function. The following lemma gives the explicit form of the Hessian matrix for the quadratic case.

\begin{lemma}\label{lemma4}
Let
\[
S(x_1, \dots, x_n) = \frac{|A|}{6} \sum_{i=1}^{n+1} (x_i - x_{i-1})^3, \qquad x_0 = a, \quad x_{n+1} = b, \quad A\neq 0.
\]
Then the Hessian matrix
$
H = \left( \frac{\partial^2 S}{\partial x_i \partial x_j} \right)_{i,j=1}^{n}
$
of size \( n \times n \) has the following form:
\[
H = |A| \cdot
\begin{pmatrix}
x_2 - x_0 & -(x_2 - x_1) & 0 & \cdots & 0 \\
-(x_2 - x_1) & x_3 - x_1 & -(x_3 - x_2) & \cdots & 0 \\
0 & -(x_3 - x_2) & x_4 - x_2 & \cdots & 0 \\
\vdots & \vdots & \vdots & \ddots & \vdots \\
0 & 0 & 0 & \cdots & x_{n+1} - x_{n-1}
\end{pmatrix}.
\]

\end{lemma}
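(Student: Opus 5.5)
The plan is a direct computation of the second partial derivatives, exploiting the fact that each variable $x_i$ ($1\le i\le n$) enters only two summands of
\[
S=\frac{|A|}{6}\sum_{k=1}^{n+1}(x_k-x_{k-1})^3 ,
\]
namely $(x_i-x_{i-1})^3$ and $(x_{i+1}-x_i)^3$. This locality forces $H$ to be tridiagonal, and we only need to identify the diagonal and off-diagonal entries.

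First compute the gradient. Differentiating the two relevant summands gives
\[
\frac{\partial S}{\partial x_i}=\frac{|A|}{2}\Bigl((x_i-x_{i-1})^2-(x_{i+1}-x_i)^2\Bigr),\qquad i=1,\dots,n .
\]
The endpoints $x_0=a$ and $x_{n+1}=b$ are constants, so the formula is uniform in $i$, including $i=1$ and $i=n$.

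Next differentiate again.

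\textbf{Diagonal entries.} We get
\[
\frac{\partial^2 S}{\partial x_i^2}=|A|\bigl((x_i-x_{i-1})+(x_{i+1}-x_i)\bigr)=|A|(x_{i+1}-x_{i-1}).
\]

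\textbf{Off-diagonal entries.} The first partial $\partial S/\partial x_i$ depends only on $x_{i-1},x_i,x_{i+1}$. Hence $\partial^2 S/\partial x_i\partial x_j=0$ whenever $|i-j|\ge 2$. For $j=i+1$, only the term $-(x_{i+1}-x_i)^2$ involves $x_{i+1}$, so
\[
\frac{\partial^2 S}{\partial x_i\partial x_{i+1}}=-|A|(x_{i+1}-x_i).
\]
By symmetry of mixed partials (S is a polynomial), the same value appears in position $(i+1,i)$.

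Assembling these entries reproduces exactly the displayed matrix: the diagonal is $x_2-x_0,\ x_3-x_1,\dots,x_{n+1}-x_{n-1}$, and the super- and subdiagonals are $-(x_{i+1}-x_i)$.

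There is no real obstacle. The only step requiring care is the boundary rows $i=1$ and $i=n$, where one must confirm that the fixed endpoints $x_0$ and $x_{n+1}$ contribute no extra variables. A secondary check is the sign bookkeeping in the off-diagonal term, which comes from the chain-rule factor $-1$ in $\partial(x_{i+1}-x_i)/\partial x_i$.
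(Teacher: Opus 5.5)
Your proof is correct and follows the paper's argument essentially step for step. You compute $\partial S/\partial x_i=\frac{|A|}{2}\bigl((x_i-x_{i-1})^2-(x_{i+1}-x_i)^2\bigr)$, differentiate again to get the diagonal entries $|A|(x_{i+1}-x_{i-1})$ and the superdiagonal entries $-|A|(x_{i+1}-x_i)$, use symmetry for the subdiagonal, and use locality for the zeros; your explicit remark that the fixed endpoints $x_0=a$, $x_{n+1}=b$ make these formulas valid also for $i=1$ and $i=n$ is a minor clarification the paper leaves implicit.
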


\begin{proof}
By Lemma \ref{lemma3}, we have
\[
S(x_1, \dots, x_n) = \frac{|A|}{6} \sum_{k=1}^{n+1} (x_k - x_{k-1})^3.
\]
Only two terms in this sum depend on \( x_i \) (those with indices \( k = i \) and \( k = i+1 \)):
\[
S = \frac{|A|}{6} \Bigl( (x_i - x_{i-1})^3 + (x_{i+1} - x_i)^3 \Bigr) + \left( S-\frac{|A|}{6} \Bigl( (x_i - x_{i-1})^3 + (x_{i+1} - x_i)^3 \Bigr)\right).
\]
Compute the first derivative with respect to $x_i$ of the function $S$:
\[
\frac{\partial S}{\partial x_i}
= \frac{|A|}{2} \Bigl( (x_i - x_{i-1})^2 - (x_{i+1} - x_i)^2 \Bigr).
\]
Next, we find the second derivatives of $S$.

Diagonal entry:
\[
\frac{\partial^2 S}{\partial x_i^2}
= |A| \bigl(  x_{i+1} - x_{i-1}  \bigr).
\]

Off-diagonal entry \( (i, i+1) \):
\[
\frac{\partial^2 S}{\partial x_i \partial x_{i+1}} = \frac{\partial}{\partial x_{i+1}} \left( \frac{|A|}{2} \bigl( (x_i - x_{i-1})^2 - (x_{i+1} - x_i)^2 \bigr) \right)
= -|A| (x_{i+1} - x_i).
\]

Entries \( (i, i-1) \): by symmetry of the Hessian matrix,
\[
\frac{\partial^2 S}{\partial x_i \partial x_{i-1}} = \frac{\partial^2 S}{\partial x_{i-1} \partial x_i} = -|A| (x_i-x_{i-1}).
\]

All other entries: if \( |i - j| > 1 \), then \( x_i \) and \( x_j \) appear in different terms of the sum, hence,
\[
\frac{\partial^2 S}{\partial x_i \partial x_j} = 0.
\]

Thus, the Hessian matrix has the form
\[
H = |A| \cdot
\begin{pmatrix}
x_2 - x_0 & -(x_2 - x_1) & 0 & \cdots & 0 \\
-(x_2 - x_1) & x_3 - x_1 & -(x_3 - x_2) & \cdots & 0 \\
0 & -(x_3 - x_2) & x_4 - x_2 & \cdots & 0 \\
\vdots & \vdots & \vdots & \ddots & \vdots \\
0 & 0 & 0 & \cdots & x_{n+1} - x_{n-1}
\end{pmatrix}.
\]

Lemma \ref{lemma4} is proved. 
\end{proof}

The following lemma shows that the Hessian matrix of \( S \) is positive definite for any partition of the interval.

\begin{lemma}\label{lemma5}
Let
\[
S(x_1, \dots, x_n) = \frac{|A|}{6} \sum_{i=1}^{n+1} (x_i - x_{i-1})^3, \qquad x_0 = a, \quad x_{n+1} = b, \quad A \neq 0.
\]
Then the matrix
\[
H = |A| \cdot
\begin{pmatrix}
x_2 - x_0 & -(x_2 - x_1) & 0 & \cdots & 0 \\
-(x_2 - x_1) & x_3 - x_1 & -(x_3 - x_2) & \cdots & 0 \\
0 & -(x_3 - x_2) & x_4 - x_2 & \cdots & 0 \\
\vdots & \vdots & \vdots & \ddots & \vdots \\
0 & 0 & 0 & \cdots & x_{n+1} - x_{n-1}
\end{pmatrix}
\]
is positive definite.
\end{lemma}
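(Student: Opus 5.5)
Set $h_k = x_k - x_{k-1} > 0$ for $k = 1, \dots, n+1$; these are positive because the partition is strictly increasing. The plan is to write the quadratic form of $H$ as a sum of squares with these positive weights. Take any $v = (v_1, \dots, v_n) \in \mathbb{R}^n$ and set $v_0 = v_{n+1} = 0$. In the new notation the diagonal entries of $H/|A|$ are $h_i + h_{i+1}$ and the off-diagonal entries $(i,i+1)$ are $-h_{i+1}$. So
\[
\frac{1}{|A|}\, v^{T} H v = \sum_{i=1}^{n} (h_i + h_{i+1}) v_i^2 - 2\sum_{i=1}^{n-1} h_{i+1} v_i v_{i+1}.
\]

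Next I would regroup this expression by the index $k$ of the weight $h_k$. The weight $h_1$ multiplies only $v_1^2$, and $h_{n+1}$ multiplies only $v_n^2$. For $2 \le k \le n$, the weight $h_k$ multiplies $v_{k-1}^2 + v_k^2 - 2v_{k-1}v_k$. With the convention $v_0 = v_{n+1} = 0$ this gives the identity
\[
\frac{1}{|A|}\, v^{T} H v = \sum_{k=1}^{n+1} h_k\,(v_k - v_{k-1})^2 .
\]
This is the identity I expect to be the crux. Once it is checked, the rest is immediate.

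All $h_k > 0$, so the right-hand side is nonnegative. It vanishes only if $v_k = v_{k-1}$ for every $k = 1, \dots, n+1$. Starting from $v_0 = 0$, this forces $v_1 = \dots = v_n = 0$. Hence $v^{T} H v > 0$ for every $v \neq 0$, and since $|A| > 0$ the matrix $H$ is positive definite. This holds at every admissible partition, which is exactly what Proposition~\ref{prop1} needs on the open convex set $\{a < x_1 < \dots < x_n < b\}$.

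As an alternative, $H/|A|$ is symmetric and tridiagonal with positive diagonal. Each row satisfies $h_i + h_{i+1} \ge h_i + h_{i+1}$, with strict inequality in the first and last rows, so $H/|A|$ is weakly diagonally dominant. Its off-diagonal entries are all nonzero, so it is irreducible. Irreducible weak diagonal dominance with strict dominance in at least one row then gives positive definiteness. I would use the sum-of-squares identity as the main argument, because it is self-contained. The only care needed is the index bookkeeping at the boundary terms $k = 1$ and $k = n+1$, and the degenerate case $n = 1$, where $H = |A|(x_2 - x_0) > 0$ directly.
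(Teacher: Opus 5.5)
Your proof is correct and is essentially the paper's argument. The paper writes the quadratic form as $|A|\bigl((x_1-x_0)\xi_1^2+\sum_{i=2}^{n}(x_i-x_{i-1})(\xi_i-\xi_{i-1})^2+(x_{n+1}-x_n)\xi_n^2\bigr)$, which is exactly your identity $\sum_{k=1}^{n+1}h_k(v_k-v_{k-1})^2$ with $v_0=v_{n+1}=0$, and it concludes by the same successive-vanishing argument (your diagonal-dominance alternative is also valid but unnecessary).
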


\begin{proof} Indeed, for an arbitrary nonzero vector \( \xi = (\xi_1, \ldots, \xi_n)^{T} \in \mathbb{R}^n \), the quadratic form is given by
\[
\xi^T H \xi = |A| \left( (x_1 - x_0) \xi_1^2 + \sum_{i=2}^n (x_i - x_{i-1})(\xi_i - \xi_{i-1})^2 + (x_{n+1} - x_n) \xi_n^2 \right),
\]
where all coefficients satisfy \( |A| > 0 \) and \( x_i - x_{i-1} > 0 \); hence each term is nonnegative. If \( \xi \neq 0 \), then at least one of these terms is strictly positive (otherwise, we successively obtain \( \xi_1 = 0, \xi_2 = 0, \ldots, \xi_n = 0 \), contradicting \( \xi \neq 0 \)). Therefore, \( \xi^T H \xi > 0 \), and the matrix \( H \) is positive definite. 
\end{proof}

The following
theorem shows that the optimal nodes are uniformly distributed and
gives an explicit formula for the minimum area.

\begin{theorem}\label{theorem1}
{\it Consider the segment $[a, b]$ with the corresponding parabola $y = Ax^2 + Bx + C$.
In this case, the optimal partition has the form
$$
x_i = x_0 + i \frac{x_{n+1} - x_0}{n+1}, \qquad
x_0 = a, \quad x_{n+1} = b, \quad i = 0, 1, \ldots, n, n+1,
$$
and the minimum area is
$$
S_{\min} = \frac{|A|}{6(n+1)^2}(b-a)^3.
$$}
\end{theorem}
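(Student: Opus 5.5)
The plan is to apply Proposition \ref{prop1} to the function $S(x_1,\dots,x_n)$ from Lemma \ref{lemma3}. We take it on the open set
\[
\Omega=\{(x_1,\dots,x_n)\in\mathbb{R}^n:\ a<x_1<x_2<\cdots<x_n<b\}.
\]
This set is convex, being an intersection of open half-spaces. By Lemma \ref{lemma3}, $S$ is a polynomial in $x_1,\dots,x_n$, so it is twice continuously differentiable on $\Omega$. By Lemmas \ref{lemma4} and \ref{lemma5}, its Hessian is positive definite at every point of $\Omega$. Hence condition 2 of Proposition \ref{prop1} holds, and it remains to locate a critical point inside $\Omega$.

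To find it, I would use the first derivatives already computed in the proof of Lemma \ref{lemma4}:
\[
\frac{\partial S}{\partial x_i}=\frac{|A|}{2}\Bigl((x_i-x_{i-1})^2-(x_{i+1}-x_i)^2\Bigr),\qquad i=1,\dots,n.
\]
Setting these to zero gives $(x_i-x_{i-1})^2=(x_{i+1}-x_i)^2$. In $\Omega$ all increments $h_i=x_i-x_{i-1}$ are positive, so this forces $h_i=h_{i+1}$ for $i=1,\dots,n$. Thus all $n+1$ increments are equal. Since $h_1+\cdots+h_{n+1}=b-a$, each increment equals $(b-a)/(n+1)$, which gives $x_i=a+i\frac{b-a}{n+1}$. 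This point lies in $\Omega$, and it is the only critical point there.

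Proposition \ref{prop1} then says this uniform partition is the unique global minimum of $S$ on $\Omega$. Substituting into Lemma \ref{lemma3} gives
\[
S_{\min}=\frac{|A|}{6}(n+1)\Bigl(\frac{b-a}{n+1}\Bigr)^3=\frac{|A|(b-a)^3}{6(n+1)^2},
\]
which is the claimed value.

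The only delicate point is the choice of domain. Proposition \ref{prop1} needs an open convex set on which the Hessian is positive definite everywhere, and Lemma \ref{lemma5} uses $x_i-x_{i-1}>0$, so we must work on the ordered simplex $\Omega$ rather than on all of $\mathbb{R}^n$. One might also ask about degenerate partitions on the boundary of $\Omega$, where some nodes coincide. These are not admissible, since the partition is required to be strict. In any case they reduce to the same problem with fewer nodes, whose minimum $\frac{|A|(b-a)^3}{6m^2}$ with $m<n+1$ is larger than $S_{\min}$, so they do not affect the result.
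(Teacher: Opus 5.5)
Your proof is correct and follows the paper's argument essentially step for step. It uses the same convex open domain $\Omega$, the same critical-point computation forcing equal increments, Lemmas~\ref{lemma4} and~\ref{lemma5} for positive definiteness of the Hessian, Proposition~\ref{prop1} for the unique global minimum, and substitution into Lemma~\ref{lemma3}. Your closing remark on degenerate partitions (coinciding nodes reduce to fewer subintervals, with the strictly larger minimum $\frac{|A|(b-a)^3}{6m^2}$, $m<n+1$) is not in the paper, but it is a correct and harmless addition.
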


\begin{proof}
Consider an arbitrary partition of the interval \([a, b]\):
\[
a = x_0 < x_1 < \dots < x_n < x_{n+1} = b.
\]

By Lemma \ref{lemma3}, the total area between the parabola and the polygonal line is
\begin{equation}\label{pr1th1}
S(x_1, \dots, x_n) =  \frac{|A|}{6} \sum_{i=1}^{n+1} (x_i - x_{i-1})^3.
\end{equation}

We now minimize \(S(x_1, \dots, x_n)\) subject to
\[
a = x_0 < x_1 < \dots < x_n < x_{n+1} = b.
\]
Computing the partial derivatives gives
\[
\frac{\partial S}{\partial x_i} = \frac{|A|}{2} \left( (x_i - x_{i-1})^2 - (x_{i+1} - x_i)^2 \right), \qquad i = 1, \dots, n.
\]
Setting them to zero yields
\[
(x_i - x_{i-1})^2 = (x_{i+1} - x_i)^2.
\]
Since all differences are positive, we obtain
\[
x_i - x_{i-1} = x_{i+1} - x_i, \qquad i = 1, \dots, n.
\]
Thus all subinterval lengths are equal. Since the total length is \(b-a\), we have
\[
x_i - x_{i-1} = \frac{b-a}{n+1}, \qquad i = 1, \dots, n+1.
\]
Hence,
\begin{equation}\label{pr2th1}
x_i = a + i \,\frac{b-a}{n+1}, \qquad i = 0, 1, \dots, n, n+1.
\end{equation}

To show that this point is a global minimum, we use Lemma \ref{lemma4} and Lemma~\ref{lemma5}.
By Lemma \ref{lemma4}, the Hessian matrix of \(S\) has the form
\[
H = |A| \cdot
\begin{pmatrix}
x_2 - x_0 & -(x_2 - x_1) & 0 & \cdots & 0 \\
-(x_2 - x_1) & x_3 - x_1 & -(x_3 - x_2) & \cdots & 0 \\
0 & -(x_3 - x_2) & x_4 - x_2 & \cdots & 0 \\
\vdots & \vdots & \vdots & \ddots & \vdots \\
0 & 0 & 0 & \cdots & x_{n+1} - x_{n-1}
\end{pmatrix}.
\]
By Lemma \ref{lemma5}, this matrix is positive definite for any partition.
Thus, all conditions of Proposition \ref{prop1} are satisfied:
\(S\) is defined on the convex open domain
\[
\Omega = \{ (x_1, \dots, x_n)\in \mathbb{R}^{n}: a < x_1 < \dots < x_n < b \},
\]
has a unique critical point $x^{*}=(x_1, \dots, x_n)$ with coordinates satisfying \eqref{pr2th1}, and \(H\) is positive definite throughout \(\Omega\).
Therefore, by Proposition \ref{prop1}, this critical point is the unique global minimum.

Substituting \eqref{pr2th1} into \eqref{pr1th1}, we obtain the minimum area:
\[
S_{\min} = \frac{|A|}{6} \sum_{i=1}^{n+1} \left( \frac{b-a}{n+1} \right)^3
= \frac{|A|}{6(n+1)^2}(b-a)^3.
\]

Thus, Theorem \ref{theorem1} is proved. 
\end{proof}

\section{The  case $x^m$}

This section considers the optimal piecewise linear approximation of polynomial functions \(f(x)=x^m\), \(m\in\mathbb{N}\), \(m\geq 2\), on the interval \([a,b]\), \(a\geq 0\). For an arbitrary number of interior nodes, a general formula for the area is obtained and a system of equations for the critical points is derived.

\begin{lemma}\label{lemma6}
Let $m \in \mathbb{N}$, $m \ge 2$, and let $0 \le p < q$. Consider the function $f(x) = x^m$ on the interval $[p, q]$. Denote by $\ell_{p,q}(x)$ the secant line connecting the points $(p, p^m)$ and $(q, q^m)$ on the graph of $f$. Then the equation of the secant is given by
\[
\ell_{p,q}(x)=\frac{q^{m}-p^{m}}{q-p}\,x+\frac{qp^{m}-pq^{m}}{q-p},
\qquad 0 \le p < q.
\]
\end{lemma}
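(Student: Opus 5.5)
The argument mirrors the proof of Lemma \ref{lemma1}. We start from the two-point form of the line through $(p,p^m)$ and $(q,q^m)$. This form is legitimate because $p<q$, so $q-p\neq 0$:
\[
\ell_{p,q}(x)=p^m+\frac{q^m-p^m}{q-p}\,(x-p).
\]
The slope is already the coefficient claimed in the statement, so only the constant term needs to be identified.

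Expanding, the constant term is
\[
p^m-\frac{(q^m-p^m)\,p}{q-p}.
\]
Putting this over the common denominator $q-p$ gives the numerator
\[
p^m(q-p)-p\,q^m+p^{m+1}=q\,p^m-p\,q^m .
\]
This is exactly the intercept $\dfrac{qp^m-pq^m}{q-p}$ in the statement.

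As a consistency check, we substitute $x=q$ and verify $\ell_{p,q}(q)=q^m$. The numerator becomes $q^{m+1}-qp^m+qp^m-pq^m$. This factors as $q^m(q-p)$, so the value is indeed $q^m$. The check $x=p$ is analogous.

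There is no real obstacle. The only points to watch are the sign bookkeeping when combining the constant term over the common denominator, and noting that $p<q$ justifies the division. The hypotheses $m\ge 2$ and $p\ge 0$ are not needed for this formula. They matter only later, for the convexity of $x^m$ and hence the sign of the area integrand, and we may remark on this.
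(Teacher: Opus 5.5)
Your proposal is correct and is essentially the paper's proof: start from the two-point form $p^m+\frac{q^m-p^m}{q-p}(x-p)$, then collect the constant term over the common denominator $q-p$ to get $qp^m-pq^m$. Your check at $x=q$ is an optional extra, and you are right that the hypotheses $m\ge 2$ and $p\ge 0$ play no role in this formula.
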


\begin{proof}
The equation of the line passing through the two points $(p, p^m)$ and $(q, q^m)$ has the form
\[
\ell_{p,q}(x) = p^m + \frac{q^m - p^m}{q - p}(x - p).
\]
Expanding the brackets and collecting like terms yield
\[
\ell_{p,q}(x) = \frac{q^m - p^m}{q - p} x + \frac{ q p^m-p q^m }{q - p}.
\]
\end{proof}

\begin{lemma}\label{lemma7}
Let $m \in \mathbb{N}$, $m \ge 2$, and let $0 \le p < q$. Consider the function $f(x) = x^m$ on the interval $[p, q]$. Denote by $\ell_{p,q}(x)$ the secant line connecting the points $(p, p^m)$ and $(q, q^m)$ on the graph of $f$.

Then the area $S(p,q)$ between the graph of $f(x) = x^m$ and the secant $\ell_{p,q}(x)$ on the interval $[p, q]$ is given by the formula:
\[
S(p,q) =
\begin{cases}
\dfrac{p^m q - q^m p}{2} + \dfrac{m-1}{2(m+1)}(q^{m+1} - p^{m+1}), & p > 0, \\[1.5ex]
\dfrac{m-1}{2(m+1)}\, q^{m+1}, & p = 0.
\end{cases}
\]
\end{lemma}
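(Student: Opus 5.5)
Since $m\ge 2$ and $x\ge 0$, the function $f(x)=x^m$ is convex on $[p,q]$, so the chord $\ell_{p,q}$ from Lemma \ref{lemma6} lies on or above the graph. Hence the area equals the integral without absolute value:
\[
S(p,q)=\int_p^q \bigl(\ell_{p,q}(x)-x^m\bigr)\,dx .
\]
I will evaluate the two integrals separately and then simplify.

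The chord is linear, so its integral is the trapezoid area,
\[
\int_p^q \ell_{p,q}(x)\,dx=\frac{(q-p)(p^m+q^m)}{2}.
\]
This avoids integrating the explicit slope/intercept form of Lemma \ref{lemma6}. The other integral is
\[
\int_p^q x^m\,dx=\frac{q^{m+1}-p^{m+1}}{m+1}.
\]

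Next I expand the trapezoid term:
\[
\frac{(q-p)(p^m+q^m)}{2}=\frac{q^{m+1}-p^{m+1}}{2}+\frac{p^m q-q^m p}{2}.
\]
Subtracting the integral of $x^m$ then gives
\[
\frac{p^m q-q^m p}{2}+\Bigl(\frac12-\frac1{m+1}\Bigr)(q^{m+1}-p^{m+1}),
\]
and $\frac12-\frac1{m+1}=\frac{m-1}{2(m+1)}$. This is the case $p>0$.

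For $p=0$, the term $\frac{p^mq-q^mp}{2}$ vanishes and $p^{m+1}=0$, which leaves $\frac{m-1}{2(m+1)}q^{m+1}$. So the second line of the formula is just the specialization of the first, and the same computation covers both cases.

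No step is a real obstacle. The only point needing care is the sign: I must justify that the chord lies above the graph, so that no absolute value is needed. This follows from $f''(x)=m(m-1)x^{m-2}\ge 0$ on $[0,\infty)$, that is, from convexity of $f$.
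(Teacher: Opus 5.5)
Your proposal is correct and follows essentially the same route as the paper: write the area as the difference of the trapezoid integral and $\int_p^q x^m\,dx$, expand, and specialize to $p=0$. Your convexity argument ($f''(x)=m(m-1)x^{m-2}\ge 0$ on $[0,\infty)$) justifies dropping the absolute value, which the paper's proof uses without comment.
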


\begin{proof}
By Lemma \ref{lemma6}, the secant line connecting the points $(p, p^m)$ and $(q, q^m)$ on the graph of $f$ is given by
\[
\ell_{p,q}(x)=\frac{q^{m}-p^{m}}{q-p}\,x+\frac{qp^{m}-pq^{m}}{q-p}.
\]
Then the area between the graph and the secant is the difference of integrals:
\[
S(p,q) = \int_p^q \ell_{p,q}(x)\,dx - \int_p^q x^m\,dx.
\]

The integral of the secant equals the area of the trapezoid:
\[
\int_p^q \ell_{p,q}(x)\,dx = \frac{p^m + q^m}{2}(q-p).
\]

The integral of $x^m$ is:
\[
\int_p^q x^m\,dx = \frac{q^{m+1} - p^{m+1}}{m+1}.
\]

Hence,
\[
S(p,q) = \frac{p^m + q^m}{2}(q-p) - \frac{q^{m+1} - p^{m+1}}{m+1}.
\]

Expanding and simplifying yield:
\[
S(p,q) = \frac{p^m q - q^m p}{2} + \frac{m-1}{2(m+1)}(q^{m+1} - p^{m+1}), \qquad p > 0.
\]

For $p = 0$, the formula gives:
\[
S(0,q) = \frac{m-1}{2(m+1)}\, q^{m+1}.
\]

Thus, Lemma \ref{lemma7} is proved. 
\end{proof}

\begin{lemma}\label{lemma8}
Let \( m \in \mathbb{N} \), \( m \geq 2 \), and let \( 0 \leq a < b \).
Consider a partition of the interval \([a, b]\):

\[
a = x_0 < x_1 < \cdots < x_n < x_{n+1} = b.
\]

Let \( S(x_1, \dots, x_n) \) be the area between the graph of the function
\( f(x) = x^m \) and the inscribed polygonal line passing through the points

\[
(x_0, x_0^m), \; (x_1, x_1^m), \; \dots, \; (x_n, x_n^m), \; (x_{n+1}, x_{n+1}^m).
\]

Then the following formula holds:

\[
S=S(x_1, \dots, x_n) =
\frac{1}{2} \sum_{i=0}^{n} \bigl( x_i^m x_{i+1} - x_{i+1}^m x_i \bigr)
+ \frac{m-1}{2(m+1)} \bigl( b^{m+1} - a^{m+1} \bigr).
\]

\end{lemma}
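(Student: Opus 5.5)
The plan is to decompose the total area into the contributions of the individual subintervals, apply Lemma~\ref{lemma7} on each one, and then let the power terms telescope. Since $f(x)=x^m$ is convex on $[0,\infty)$ for $m\ge 2$, on each subinterval $[x_{i-1},x_i]$ the chord lies above the graph. The polygonal line between consecutive nodes coincides with the chord $\ell_{x_{i-1},x_i}$. The region enclosed between the graph and the polygonal line is therefore the union, over $i=1,\dots,n+1$, of the regions between the graph and the chord on $[x_{i-1},x_i]$. These regions overlap only along vertical boundary segments of zero area, so
\[
S(x_1,\dots,x_n)=\sum_{i=1}^{n+1} S(x_{i-1},x_i),
\]
which is exactly the decomposition used in Lemma~\ref{lemma3} for the quadratic case.

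Next, I would apply Lemma~\ref{lemma7} to each summand. To avoid splitting into the cases $p>0$ and $p=0$, I would note that the first formula of Lemma~\ref{lemma7} already reduces to the second when $p=0$, since $p^mq-q^mp=0$ there. So for every $i$ one may write
\[
S(x_{i-1},x_i)=\frac{x_{i-1}^m x_i - x_i^m x_{i-1}}{2}+\frac{m-1}{2(m+1)}\bigl(x_i^{m+1}-x_{i-1}^{m+1}\bigr).
\]
This covers the case $a=0$, where only the first subinterval has $p=0$.

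Finally, I would sum over $i=1,\dots,n+1$ and reindex the first group by $i\mapsto i+1$; this gives $\frac12\sum_{i=0}^{n}(x_i^m x_{i+1}-x_{i+1}^m x_i)$. The second group telescopes: $\sum_{i=1}^{n+1}(x_i^{m+1}-x_{i-1}^{m+1})=x_{n+1}^{m+1}-x_0^{m+1}=b^{m+1}-a^{m+1}$. Together these give the stated formula.

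No step is a real obstacle. The only point that needs care is justifying that the total area is the sum of the subinterval areas with no sign or absolute-value issues. This rests on the convexity of $x^m$ on $[0,\infty)$, which makes every integrand $\ell_{x_{i-1},x_i}-f$ nonnegative, consistent with the sign convention in the proof of Lemma~\ref{lemma7}.
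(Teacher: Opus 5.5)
Your proposal is correct and follows essentially the same route as the paper: apply Lemma~\ref{lemma7} on each subinterval $[x_i,x_{i+1}]$, sum, and telescope the $x^{m+1}$ terms. Your two extra remarks make explicit points the paper passes over silently. Convexity of $x^m$ means no absolute values are needed, and the $p>0$ formula of Lemma~\ref{lemma7} already covers $p=0$, which matters when $a=0$.
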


\begin{proof}
On each subinterval \([x_i, x_{i+1}]\), where \( i = 0, 1, \dots, n \),
the polygonal line coincides with the secant line to the graph of
\( f(x) = x^m \). By Lemma \ref{lemma7}, the area between the graph and this secant is

\[
S(x_i, x_{i+1}) =
\frac{x_i^m x_{i+1} - x_{i+1}^m x_i}{2}
+ \frac{m-1}{2(m+1)} \bigl( x_{i+1}^{m+1} - x_i^{m+1} \bigr).
\]

The total area is the sum over all subintervals:

\[
S=S(x_1, \dots, x_n) = \sum_{i=0}^{n} S(x_i, x_{i+1}).
\]

Therefore,
\begin{equation}\label{pr1lemma8}
\begin{aligned}
S
= \frac{1}{2} \sum_{i=0}^{n} \bigl( x_i^m x_{i+1} - x_{i+1}^m x_i \bigr) 
 + \frac{m-1}{2(m+1)} \sum_{i=0}^{n} \bigl( x_{i+1}^{m+1} - x_i^{m+1} \bigr).
\end{aligned}
\end{equation}

Computing the second sum:
\begin{equation}\label{pr2lemma8}
\sum_{i=0}^{n} \bigl( x_{i+1}^{m+1} - x_i^{m+1} \bigr)
= x_{n+1}^{m+1} - x_0^{m+1} = b^{m+1} - a^{m+1}.
\end{equation}

Finally, combining \eqref{pr1lemma8} and \eqref{pr2lemma8} yields the desired formula. 
\end{proof}

Below we present the theorem on the critical points of the area function $S$ for the case $x^m$.

\begin{theorem}\label{theorem2}
Let \( f(x) = x^m \), \( m \in \mathbb{N} \), \( m \ge 2 \), and let
\[
a = x_0 < x_1 < \dots < x_n < x_{n+1} = b, \quad a \ge 0, 
\]
be a partition of the interval \([a, b]\).
Let \( S(x_1, \dots, x_n) \) be the area between the graph of \( f(x) \) and the inscribed polygonal line, as given in Lemma \ref{lemma8}. Then the critical points of \( S \) with respect to the variables \( x_1, \dots, x_n \) satisfy the following system:

\[
\frac{x_{k+1}^m - x_{k-1}^m}{x_{k+1} - x_{k-1}} = m x_k^{m-1}, \quad k = 1, \dots, n,
\]
where \( x_0 = a \), \( x_{n+1} = b \).
\end{theorem}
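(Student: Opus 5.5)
We will start from the closed formula of Lemma \ref{lemma8},
\[
S(x_1,\dots,x_n)=\frac12\sum_{i=0}^{n}\bigl(x_i^m x_{i+1}-x_{i+1}^m x_i\bigr)+\frac{m-1}{2(m+1)}\bigl(b^{m+1}-a^{m+1}\bigr),
\]
and differentiate it with respect to a fixed interior variable $x_k$, $1\le k\le n$. The second summand is a constant, since it depends only on the fixed endpoints $a,b$, so it contributes nothing to $\partial S/\partial x_k$. In the first sum, $x_k$ appears only in the two terms with indices $i=k-1$ and $i=k$, namely
\[
x_{k-1}^m x_k - x_k^m x_{k-1}
\qquad\text{and}\qquad
x_k^m x_{k+1}-x_{k+1}^m x_k .
\]
All other terms are independent of $x_k$. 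This is the same locality observation used in the proof of Lemma \ref{lemma4}.

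Differentiating these two terms gives
\[
\frac{\partial S}{\partial x_k}=\frac12\Bigl(x_{k-1}^m - m x_k^{m-1}x_{k-1} + m x_k^{m-1}x_{k+1} - x_{k+1}^m\Bigr).
\]
Grouping the terms yields
\[
\frac{\partial S}{\partial x_k}=\frac12\Bigl(m x_k^{m-1}(x_{k+1}-x_{k-1}) - (x_{k+1}^m - x_{k-1}^m)\Bigr).
\]
A critical point is defined by $\partial S/\partial x_k=0$ for every $k=1,\dots,n$. Since the partition is strictly increasing, $x_{k+1}-x_{k-1}>0$, so we may divide by this difference. This gives exactly
\[
\frac{x_{k+1}^m-x_{k-1}^m}{x_{k+1}-x_{k-1}}=m x_k^{m-1}.
\]
The conventions $x_0=a$ and $x_{n+1}=b$ handle the cases $k=1$ and $k=n$ uniformly, because those endpoints enter the formula as constants.

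The only point needing care is that Lemma \ref{lemma8} is the correct expression for $S$ throughout the open domain $\{a<x_1<\dots<x_n<b\}$. We will also note that the case $a=0$ needs no special treatment. Although Lemma \ref{lemma7} has a separate formula when $p=0$, the general expression reduces to it at $p=0$, and $x_0$ is in any case not a variable. The expression in Lemma \ref{lemma8} is a polynomial in $x_1,\dots,x_n$, so differentiability is automatic. No real obstacle is expected; the main task is careful bookkeeping of the signs in the two adjacent terms.

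Geometrically, the resulting system says that the chord slope over $[x_{k-1},x_{k+1}]$ equals the tangent slope at $x_k$. For $m=2$ it reduces to $x_k=(x_{k-1}+x_{k+1})/2$, which is consistent with Theorem \ref{theorem1}.
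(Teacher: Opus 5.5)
Your proposal is correct and follows essentially the same route as the paper. The paper likewise isolates the two terms of the sum in Lemma~\ref{lemma8} that contain $x_k$ (its $\alpha_k$), differentiates them to get $\frac12\bigl(x_{k-1}^m-x_{k+1}^m+mx_k^{m-1}(x_{k+1}-x_{k-1})\bigr)=0$, and divides by $x_{k+1}-x_{k-1}>0$.
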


\begin{proof} 
By Lemma \ref{lemma8}, the area is
\[
S(x_1, \dots, x_n) =
\frac{1}{2} \sum_{i=0}^{n} \bigl( x_i^m x_{i+1} - x_{i+1}^m x_i \bigr)
+ \frac{m-1}{2(m+1)} \bigl( b^{m+1} - a^{m+1} \bigr).
\]

In the last expression, we separate the terms $$\alpha_k(x_{k-1},x_k, x_{k+1})=\frac{1}{2}\left(x_k(x^m_{k-1}-x^m_{k+1})+x^m_k(x_{k+1}-x_{k-1})\right)$$ containing $x_k$:
$$
S=\alpha_k(x_{k-1},x_k, x_{k+1})+ \bigl(S-\alpha_k(x_{k-1},x_k, x_{k+1})\bigr).
$$

Differentiating with respect to \(x_k\), \(k=1,\dots,n\), and setting the derivative to zero gives
\[
\frac{\partial S}{\partial x_k}=\frac{\partial \alpha_k }{\partial x_k}=   \frac{1}{2}\left(x^m_{k-1}-x^m_{k+1}+mx^{m-1}_k(x_{k+1}-x_{k-1})\right)
=0.
\]
After elementary transformations, we obtain the following system
\[
\frac{x_{k+1}^m-x_{k-1}^m}{x_{k+1}-x_{k-1}}=m x_k^{m-1},\quad k=1,\dots,n.
\]

Thus, Theorem \ref{theorem2} is proved.
\end{proof}

\section{ Optimal piecewise linear approximation of a cubic parabola}

In this section, we consider the optimal piecewise linear approximation of the cubic parabola $y=x^3$  with two interior nodes, for which we obtain exact formulas for the optimal nodes and the minimum area between the graph and the inscribed polygonal line.

\begin{theorem}\label{theorem3}
Let \(f(x)=x^3\), \(x\in[0,b]\), \(b>0\). Consider the partition
\[
0=x_0<x_1<x_2<x_3=b.
\]
Let \(S(x_1,x_2)\) be the area between the graph of \(x^3\) and the polygonal line connec\-ting the points \((x_i,x_i^3)\), \(i=0,1,2,3\). Then:
\[
x_1^*=\frac{1+\sqrt{33}}{16}\,b,\qquad
x_2^*=\frac{\sqrt{3}+3\sqrt{11}}{16}\,b,
\]
\[
S_{\min}=S(x_1^*,x_2^*)=
\frac{2048-117\sqrt{3}-495\sqrt{11}}{8192}\,b^4.
\]
\end{theorem}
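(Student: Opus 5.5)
We first reduce to $b=1$. Under the substitution $x_i=b\,t_i$, the area of Lemma~\ref{lemma8} with $m=3$ and $a=0$ scales as $S(bt_1,bt_2)=b^4\,S_1(t_1,t_2)$, where $S_1$ is the area on $[0,1]$. So it suffices to show that the minimum of $S_1$ is attained at $t_1^*=\frac{1+\sqrt{33}}{16}$, $t_2^*=\frac{\sqrt3+3\sqrt{11}}{16}$, and to compute the minimum value. For $m=3$, $a=0$, $b=1$, Lemma~\ref{lemma8} gives
\[
S_1(t_1,t_2)=\tfrac12\bigl(t_1^3t_2-t_2^3t_1+t_2^3-t_2\bigr)+\tfrac14 ,
\]
since the terms containing $x_0=0$ vanish.

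\textbf{Critical points.} We apply Theorem~\ref{theorem2} with $m=3$. Since $\frac{u^3-v^3}{u-v}=u^2+uv+v^2$, the system becomes $x_{k+1}^2+x_{k+1}x_{k-1}+x_{k-1}^2=3x_k^2$.
\begin{itemize}
\item For $k=1$ (with $x_0=0$) this gives $t_2^2=3t_1^2$, hence $t_2=\sqrt3\,t_1$.
\item For $k=2$ (with $x_3=1$) this gives $1+t_1+t_1^2=3t_2^2=9t_1^2$, i.e. $8t_1^2-t_1-1=0$.
\end{itemize}
The only positive root is $t_1=\frac{1+\sqrt{33}}{16}$, and then $t_2=\sqrt3\,t_1=\frac{\sqrt3+3\sqrt{11}}{16}$. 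One checks that $0<t_1<t_2<1$. Hence $S_1$ has exactly one critical point in $\Omega=\{0<t_1<t_2<1\}$.

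\textbf{Global minimality.} This is the main obstacle. I would not rely on Proposition~\ref{prop1}. The Hessian has entries $S_{11}=3t_1t_2$, $S_{12}=\tfrac32(t_1^2-t_2^2)$, $S_{22}=3t_2(1-t_1)$, and its determinant
\[
9t_1t_2^2(1-t_1)-\tfrac94(t_2^2-t_1^2)^2
\]
becomes negative near $t_1=0$, so it is not positive definite on all of $\Omega$. Instead I would use a compactness argument.

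1. $S_1$ is continuous on the closed triangle $\overline{\Omega}$, so it attains its minimum there.
2. On the boundary ($t_1=0$, $t_1=t_2$, or $t_2=1$), two nodes coincide, so the polygonal line effectively has only one interior node.
3. Because $x^3$ is strictly convex on $[0,1]$, inserting an extra node strictly inside any subinterval strictly decreases the area between the chord and the graph. This holds because the two new chords lie strictly below the old chord on that subinterval.
4. Therefore every boundary value is strictly larger than some interior value. The minimum over $\overline{\Omega}$ is thus attained in the open set $\Omega$, where it must be a critical point.
5. Since the critical point is unique, it is the unique global minimizer.

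As a cross-check, one can verify directly that the Hessian is positive definite at $(t_1^*,t_2^*)$.

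\textbf{Minimum value.} Finally, substitute $t_2=\sqrt3\,t_1$ into $S_1$ and use $8t_1^2=t_1+1$ to reduce powers of $t_1$. This gives $S_1=\frac14+\frac{\sqrt3}{2}\bigl(-2t_1^4+3t_1^3-t_1\bigr)$, with $t_1=\frac{1+\sqrt{33}}{16}$. Expanding yields $\frac{2048-117\sqrt3-495\sqrt{11}}{8192}$, and multiplying by $b^4$ gives the stated $S_{\min}$.
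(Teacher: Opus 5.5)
Your proof is correct. The critical-point computation and the evaluation of $S_{\min}$ match the paper's. The divided-difference form of Theorem~\ref{theorem2} gives you $8t^2-t-1=0$ directly, while the paper's direct partial derivatives give the cubic $8t^3-9t^2+1=0$ with the extraneous root $t=1$. Your global-minimality step, however, is genuinely different.

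The paper computes the Hessian only at the critical point, which gives a local minimum by Sylvester's criterion. It then concludes globality from uniqueness of the critical point plus the claim that $S\to+\infty$ on the boundary. That claim is false: $S$ is a polynomial, bounded on the closed triangle. Its boundary values are simply areas for partitions with at most one interior node, hence at least $\frac{9-4\sqrt3}{36}b^4$ by the Corollary.

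Your argument supplies exactly the boundary control the paper lacks. Compactness of $\overline{\Omega}$, together with the strict decrease of the area under node insertion (from strict convexity of $x^3$), forces the minimum into the open set. You also rightly note that Proposition~\ref{prop1} is unavailable, because the Hessian is indefinite near $t_1=0$.

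The paper's route contributes second-order information at the optimum, which for you is only a cross-check. Yours gives a rigorous global argument that applies unchanged to any strictly convex $f$ and any number of nodes, reducing global optimality to locating the interior critical points.

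Two details are worth stating explicitly. First, the polynomial $S_1$ on $\partial\Omega$ really equals the area of the merged polygonal line, since each term of Lemma~\ref{lemma7} vanishes when $p=q$. Second, at the corners of the triangle there are no interior nodes, so there you insert two nodes rather than one.
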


\begin{proof}
By Lemma \ref{lemma8},
\begin{equation}\label{pr1th3}
S(x_1,x_2)=\frac12\left(x_1^3x_2-x_2^3x_1+x_2^3b-b^3x_2\right)+\frac14 b^4. 
\end{equation}
The partial derivatives are
\[
\frac{\partial S}{\partial x_1}=\frac12(3x_1^2x_2-x_2^3),\qquad
\frac{\partial S}{\partial x_2}=\frac12(x_1^3-3x_1x_2^2+3x_2^2b-b^3).
\]
Setting them to zero:
\begin{equation}\label{pr2th3}
\begin{cases}
3x_1^2x_2-x_2^3=0,\\
x_1^3-3x_1x_2^2+3x_2^2b-b^3=0.
\end{cases} 
\end{equation}
From the first equation, \(x_2=\sqrt{3}x_1\). Substituting into the second and letting \(t=x_1/b\):
\[
8t^3-9t^2+1=0 \quad\Rightarrow\quad (t-1)(8t^2-t-1)=0.
\]
The positive root is \(t=\frac{1+\sqrt{33}}{16}\). Hence,
\begin{equation}\label{pr3th3}
x_1^*=\frac{1+\sqrt{33}}{16}b,\qquad
x_2^*=\frac{\sqrt{3}+3\sqrt{11}}{16}b. 
\end{equation}
The Hessian matrix at the critical point is
\[
H=
\begin{pmatrix}
3\sqrt{3}(x_1^{*})^2 & -3(x_1^{*})^2 \\
-3(x_1^{*})^2 & 3\sqrt{3}x_1^{*}(b-x_1^{*})
\end{pmatrix}.
\]
\[
\Delta_1=3\sqrt{3}(x_1^{*})^2>0,\qquad
\Delta_2=\det H=9(x_1^{*})^3(3b-4x_1^{*})>0.
\]
By Sylvester's criterion, \(H\) is positive definite; hence \((x_1^*,x_2^*)\) is a local minimum. The system \eqref{pr2th3} has the unique solution \eqref{pr3th3}, and on the boundary \(S\to+\infty\), therefore, it is a global minimum.

Finally, substituting \eqref{pr3th3} into \eqref{pr1th3} yields
\[
S_{\min}=
\frac{2048-117\sqrt{3}-495\sqrt{11}}{8192}\,b^4.
\]
Theorem \ref{theorem3} is proved. 
\end{proof}

\begin{remark}
For $a=0,\ b=1$:
\[
x_1^*=\frac{1+\sqrt{33}}{16}\approx 0.42153516541,\qquad
x_2^*=\frac{\sqrt{3}+3\sqrt{11}}{16}\approx 0.73012032366,
\]
\[
S_{\min}=\frac{2048-117\sqrt{3}-495\sqrt{11}}{8192}\approx 0.02485605276.
\]
For the uniform partition $x_1=1/3,\ x_2=2/3$, the area is
\[
S_{\text{unif}}=\frac{1}{36}\approx 0.02777777778,
\]
which is larger than $S_{\min}$. Thus, the optimal non-uniform partition reduces the deviation area by approximately $10.5\%$.
\end{remark}

\section{Optimal approximation of a power function with one interior node}

The problem of optimal piecewise linear approximation of the power func\-tion \(f(x)=x^m\), \(m\in\mathbb{N}\), \(m\geq 2\), on the interval \([a,b]\), \(a\geq 0\), with one interior node is considered. Explicit formulas for the optimal node and the minimum area are obtained.

\begin{theorem}\label{theorem4}
Let \(f(x)=x^m\), \(m\in\mathbb{N}\), \(m\geq 2\), \(x\in[a,b]\), \(a\geq 0\). Consider the partition
\[
a=x_0<x_1<x_2=b.
\]
Let \(S(x_1)\) be the area between the graph of \(x^m\) and the polygonal line connecting the points \((a,a^m)\), \((x_1,x_1^m)\), \((b,b^m)\). Then:
\[
x_1^*=\left(\frac{b^m-a^m}{m(b-a)}\right)^{\frac{1}{m-1}},
\]
\[
S_{\min}=S(x_1^*)=\frac{m-1}{2}\left(\frac{b^{m+1}-a^{m+1}}{m+1}-(b-a)\left(\frac{b^m-a^m}{m(b-a)}\right)^{\frac{m}{m-1}}\right).
\]
\end{theorem}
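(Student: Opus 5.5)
We specialize Lemma \ref{lemma8} to $n=1$ and then minimize a function of one variable. With $x_0=a$, $x_2=b$, Lemma \ref{lemma8} gives
\[
S(x_1)=\frac12\bigl(a^m x_1 - x_1^m a + x_1^m b - b^m x_1 + a^m b - b^m a\bigr)+\frac{m-1}{2(m+1)}\bigl(b^{m+1}-a^{m+1}\bigr),
\]
which we regroup as
\[
S(x_1)=\frac12\bigl((b-a)x_1^m-(b^m-a^m)x_1\bigr)+C(a,b),
\]
where $C(a,b)$ does not depend on $x_1$. Differentiating,
\[
S'(x_1)=\tfrac12\bigl(m(b-a)x_1^{m-1}-(b^m-a^m)\bigr).
\]
This is Theorem \ref{theorem2} with $n=1$. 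Setting $S'(x_1)=0$ gives the unique positive root $x_1^*=\bigl(\frac{b^m-a^m}{m(b-a)}\bigr)^{1/(m-1)}$.

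Next we check that $x_1^*$ is admissible and is the global minimizer. By the mean value theorem, $\frac{b^m-a^m}{b-a}=m\xi^{m-1}$ for some $\xi\in(a,b)$, so $x_1^*=\xi\in(a,b)$. Moreover
\[
S''(x_1)=\tfrac12 m(m-1)(b-a)x_1^{m-2}>0 \quad\text{on } (a,b),
\]
since $x_1>a\ge 0$. So $S$ is strictly convex on the open interval $(a,b)$, and Proposition \ref{prop1} with $n=1$ shows that $x_1^*$ is the unique global minimum.

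Finally we compute $S_{\min}$. Put $c=\frac{b^m-a^m}{m(b-a)}$, so that $(x_1^*)^{m-1}=c$ and $b^m-a^m=mc(b-a)$. Then
\[
(b-a)(x_1^*)^m-(b^m-a^m)x_1^*=(b-a)x_1^*\,c\,(1-m)=-(m-1)(b-a)c^{m/(m-1)}.
\]
The constant term is
\[
C(a,b)=\tfrac12(a^mb-b^ma)+\tfrac{m-1}{2(m+1)}\bigl(b^{m+1}-a^{m+1}\bigr).
\]
The main obstacle is reconciling this with the stated formula $\frac{m-1}{2}\bigl(\frac{b^{m+1}-a^{m+1}}{m+1}-(b-a)c^{m/(m-1)}\bigr)$, which implicitly requires $a^mb-b^ma=0$. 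That holds exactly when $a=0$, and fails for $a>0$, since then $a^mb-b^ma=ab(a^{m-1}-b^{m-1})<0$.

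So the proof will first verify the formula for $a=0$, where Lemma \ref{lemma7} gives $S(0,q)=\frac{m-1}{2(m+1)}q^{m+1}$ and everything matches. For $a>0$, we will recompute the area directly as $\frac{a^m+x_1^m}{2}(x_1-a)+\frac{x_1^m+b^m}{2}(b-x_1)-\frac{b^{m+1}-a^{m+1}}{m+1}$, to see whether the stated $S_{\min}$ needs the extra term $\frac12(a^mb-ab^m)$. Whatever the outcome, the minimizer $x_1^*$ is unaffected, because the discrepancy is independent of $x_1$.
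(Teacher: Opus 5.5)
Your specialization of Lemma~\ref{lemma8} to $n=1$ is incorrect, and this is what creates your ``obstacle.'' The sum $\sum_{i=0}^{n}(x_i^m x_{i+1}-x_{i+1}^m x_i)$ runs only over the consecutive pairs $(x_0,x_1)$ and $(x_1,x_2)$. It does not close up cyclically. So for $n=1$ it equals $a^m x_1-x_1^m a+x_1^m b-b^m x_1$, and the term $a^m b-b^m a$ you added does not belong there.

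The direct trapezoid computation you postpone to the end settles this at once. In $\frac{a^m+x_1^m}{2}(x_1-a)+\frac{x_1^m+b^m}{2}(b-x_1)-\frac{b^{m+1}-a^{m+1}}{m+1}$ the $x_1^{m+1}$ terms cancel. One gets exactly $\frac12((b-a)x_1^m-(b^m-a^m)x_1)+\frac{m-1}{2(m+1)}(b^{m+1}-a^{m+1})$.

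So your assertion that the stated $S_{\min}$ fails for $a>0$ is false. As written, the proposal does not prove the $S_{\min}$ part of the theorem for $a>0$; it actually claims that part is wrong. A sanity check would have caught this: for $m=2$, $a=1$, $b=3$ your constant gives $S_{\min}=\frac13-3<0$, a negative area. The stated formula gives $\frac13$, which agrees with Theorem~\ref{theorem1}.

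With the correct constant $C(a,b)=\frac{m-1}{2(m+1)}(b^{m+1}-a^{m+1})$, your own identity $(b-a)(x_1^*)^m-(b^m-a^m)x_1^*=-(m-1)(b-a)c^{m/(m-1)}$ gives the stated formula for every $a\ge 0$, with no case distinction. This is exactly the paper's argument.

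The remaining steps are sound:
\begin{itemize}
\item the critical-point equation;
\item strict convexity from $S''(x_1)=\frac12 m(m-1)(b-a)x_1^{m-2}>0$;
\item your mean value theorem argument showing $x_1^*\in(a,b)$, a point the paper leaves implicit.
\end{itemize}
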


\begin{proof}
By Lemma \ref{lemma8},
\begin{equation}\label{pr1th4}
S(x_1)=\frac12\left(a^m x_1 - x_1^m a + x_1^m b - b^m x_1\right)+\frac{m-1}{2(m+1)}(b^{m+1}-a^{m+1}). 
\end{equation}
Differentiating the function $S$, we obtain
\[
\frac{\partial S}{\partial x_1}=\frac12\left(a^m - m x_1^{m-1}a + m x_1^{m-1}b - b^m\right).
\]
Setting the latter expression equal to zero, we find the critical point
\begin{equation}\label{pr2th4}
x_1^*=\left(\frac{b^m-a^m}{m(b-a)}\right)^{\frac{1}{m-1}}. 
\end{equation}
The second derivative is
\[
\frac{\partial^2S}{\partial x_1^2}=\frac12 m(m-1)x_1^{m-2}(b-a)>0,
\]
so \(S(x_1)\) is strictly convex and \(x_1^*\) is the global minimum.

Finally, combining \eqref{pr2th4} and \eqref{pr1th4}, we have
\[
S_{\min}=\frac{m-1}{2}\left(\frac{b^{m+1}-a^{m+1}}{m+1}-(b-a)\left(\frac{b^m-a^m}{m(b-a)}\right)^{\frac{m}{m-1}}\right). 
\]
Theorem \ref{theorem4} is proved. 
\end{proof}

\begin{corollary}
For the cubic parabola on \([0,b]:\)
\[
x_1^*=\frac{b\sqrt{3}}{3},\qquad S_{\min}=\frac{9-4\sqrt{3}}{36}\,b^4.
\]
\end{corollary}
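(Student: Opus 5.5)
The plan is to specialize Theorem \ref{theorem4} to $m=3$ and $a=0$, and then simplify the two resulting expressions.

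For the node, the formula for $x_1^*$ in Theorem \ref{theorem4} gives
\[
x_1^*=\left(\frac{b^3}{3b}\right)^{1/2}=\frac{b}{\sqrt3}=\frac{b\sqrt3}{3}.
\]
This is admissible, since $0<b/\sqrt3<b$. Because $S$ is strictly convex in $x_1$ (Theorem \ref{theorem4}), this point is the global minimum on $(0,b)$. As an independent check, I would also verify it directly from the critical-point equation of Theorem \ref{theorem2} with $x_0=0$, $x_2=b$: it reads $b^3/b=3x_1^2$, which gives the same value.

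For the area, I would substitute into the formula for $S_{\min}$. With $\frac{m-1}{2}=1$, $\frac{b^4}{4}$ for the first term, and
\[
(b-a)\left(\frac{b^3}{3b}\right)^{3/2}=b\cdot\frac{b^3}{3\sqrt3}=\frac{\sqrt3}{9}\,b^4,
\]
we obtain
\[
S_{\min}=\frac{b^4}{4}-\frac{\sqrt3}{9}b^4=\frac{9-4\sqrt3}{36}\,b^4.
\]
As a cross-check, I would also evaluate \eqref{pr1th4} directly with $a=0$:
\[
S(x_1)=\tfrac12\left(x_1^3b-b^3x_1\right)+\tfrac14 b^4 .
\]
Plugging in $x_1=b/\sqrt3$ gives $\tfrac12\bigl(\tfrac{b^4}{3\sqrt3}-\tfrac{b^4}{\sqrt3}\bigr)=-\tfrac{b^4}{3\sqrt3}$, and adding $\tfrac14 b^4$ yields the same value.

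There is no real obstacle here. The only point needing care is the arithmetic with the fractional exponent $\frac{m}{m-1}=\frac32$, and the direct evaluation of \eqref{pr1th4} serves as the consistency check for it.
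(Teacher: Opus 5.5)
Your proposal is correct and follows the paper's route. The paper gives no separate proof because the corollary is just Theorem~\ref{theorem4} with $m=3$, $a=0$. Your simplifications $x_1^*=b/\sqrt3$ and $S_{\min}=\frac{b^4}{4}-\frac{\sqrt3}{9}b^4=\frac{9-4\sqrt3}{36}b^4$ are right, and the admissibility check $0<x_1^*<b$ and the direct evaluation of \eqref{pr1th4} are useful additions that the paper leaves implicit.
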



\begin{remark}
For the power functions $f(x)=x^m$, the optimal
interpolation nodes shift toward the right endpoint of the interval
when $m>2$, whereas for $m=2$ the partition is uniform. We prove this
for the case of one interior node.

Indeed, from Theorem \ref{theorem4} it follows that
$$
x_1^* = \left( \frac{b^m - a^m}{m(b-a)} \right)^{\frac{1}{m-1}}.
$$
For $m=2$, we have
$$
x_1^* = \frac{a+b}{2}\,.
$$
For $m>2$, it suffices to show that
\begin{equation}\label{pr1rem2}
\frac{b^m - a^m}{m(b-a)} > \left( \frac{a+b}{2} \right)^{m-1}. 
\end{equation}

By Jensen's inequality (see, e.g., Theorem 2.6.2 in \cite{Ransford}), for the convex function $\varphi(x)=x^{m-1}$
(since $m>2$) the following inequality holds:
$$
\frac{1}{b-a} \int_a^b x^{m-1}\,dx > \left( \frac{1}{b-a} \int_a^b
x\,dx \right)^{m-1} = \left( \frac{a+b}{2} \right)^{m-1}.
$$
Since
$$
\frac{1}{b-a} \int_a^b x^{m-1}\,dx = \frac{b^m - a^m}{m(b-a)},
$$
the inequality \eqref{pr1rem2} holds, hence $x_1^* > \dfrac{a+b}{2}$. 
\end{remark}

\section{Conclusions}

The proposed approach complements the classical theory of spline approximation by introducing a geometric optimization criterion based on the area between a function and its piecewise linear interpolant. In contrast to the traditional approximation criteria formulated in terms of uniform or integral norms, the proposed objective functional leads to a nonlinear optimization problem for determining the interpolation nodes. We believe that this approach may also be extended to higher-order spline approximations and other classes of interpola\-ting functions.

\bigskip

\bigskip

\textbf{Acknowledgement}

The work was supported by the National Research Foundation of Ukraine, Project number 2025.07/0014, Project name: ``Modern problems of Ma\-thematical Analysis and Geometric Function Theory''.

\bigskip

CONTACT INFORMATION

\medskip

O. O.~Pokutnyi \\Institute of Mathematics of NAS of Ukraine,
        Kyiv-4, 01601;\\National University of Kyiv-Mohyla Academy,
        Kyiv, 04070, Ukraine;\\lenasas@gmail.com

\medskip

R. R.~Salimov\\Institute of Mathematics of NAS of Ukraine,
        Kyiv-4, 01601, Ukraine;\\ruslan.salimov1@gmail.com

\medskip
M. V.~Stefanchuk\\Institute of Mathematics of NAS of Ukraine,
         Kyiv-4, 01601, Ukraine;\\stefanmv43@gmail.com

\end{document}